\def\rank{\mathop{\rm rank}\nolimits}
\newcommand{\field}[1]{\mathbb{#1}}
\newcommand{\C}{\field{C}}
\newcommand{\Q}{\field{Q}}
\newtheorem{defi}{Definition}[section]
\newtheorem{lem}[defi]{Lemma}
\newtheorem{theo}[defi]{Theorem}
\newtheorem{co}[defi]{Corollary}
\newtheorem{re}[defi]{Remark}
\title[Simple examples of affine manifolds with infinitely many exotic models]{Simple examples of affine manifolds with infinitely many exotic models}
\author{Zbigniew Jelonek}
\address[Z. Jelonek]{Instytut Matematyczny\\
Polska Akademia Nauk\\
\'Sniadeckich 8, 00-956 Warszawa, Poland}
\email{najelone@cyf-kr.edu.pl}
\keywords{algebraic vector bundle,  exotic algebraic structure} \subjclass{14 R 10, 32 Q 99, }
\date{\today}
\begin{document}

\maketitle

\begin{abstract}
We give a simple general method of construction affine varieties
with infinitely many exotic models. In particular we show that for
every $d>1$ there exists a Stein manifold of dimension $d$,  which
 has uncountable many different structures
of affine variety.
\end{abstract}

\bibliographystyle{alpha}
\maketitle

\section{Introduction.}

Given any smooth complex affine variety $X$ , one can ask if there exists smooth affine
varieties $Y$ non isomorphic to $X$ but which are biholomorphic to $X$ when equipped
with their underlying structures of complex analytic manifold. When such exist, these
varieties $Y$ could be called exotic models  of $X$.

Examples of affine varieties with exotic models was found in
dimension two and three (see   \cite{f-m}, \cite{m-p}, \cite{p}).
Moreover, in \cite{j2} we showed that for every $n\ge 7$ there are
$n-$dimensional rational affine manifolds with exotic models. The
aim of this note is to give a simple general  method of
construction of such examples. In particular we show that such
examples do exist in any dimension $d>1$ (for $d=1$ it is easy to
see that such examples do not exist). Here we modify our idea from
\cite{j2} and we prove:

\begin{theo}
Let $V$ be a non-rational smooth affine curve. Then

(i) the affine surface $Y:=V\times \C$ has uncountably many
different exotic models.

(ii)  for every non $\C-$uniruled smooth affine variety $Z$ the
variety $Y\times Z$ has an exotic model. Moreover, if the group
$Aut(V\times Z)$ is at most countable, then  the Stein $n-$fold
$X\times Z$ has uncountably  many different structures of affine
variety.
\end{theo}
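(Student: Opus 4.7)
I would realize the exotic models as total spaces of non-trivial algebraic line bundles. First, every algebraic line bundle $L\in\mathrm{Pic}(V)$ on the affine curve $V$ is analytically trivial: the exponential sequence, together with $H^1(V,\mathcal{O}_V)=0$ (as $V$ is Stein) and $H^2(V,\Z)=0$ (a smooth non-compact Riemann surface has the homotopy type of a $1$-complex), gives $\mathrm{Pic}^{\mathrm{an}}(V)=0$. Hence the total space $X_L$ is biholomorphic to $V\times\C=Y$. Pulling back via the first projection $p\colon V\times Z\to V$, the total space of $p^{*}L$ is then biholomorphic to $Y\times Z$.

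Next, $\mathrm{Pic}(V)$ is uncountable. Writing $\bar V$ for the smooth projective completion with boundary $\{p_1,\dots,p_n\}$, a standard calculation gives
$$\mathrm{Pic}(V)\cong J(\bar V)\big/\langle[p_i]-[p_1]:2\le i\le n\rangle,$$
the quotient of the positive-dimensional abelian variety $J(\bar V)$ (since $V$ is non-rational) by a finitely generated, hence countable, subgroup. Since $p^{*}$ is injective on $\mathrm{Pic}$ (restrict to a slice $V\times\{z_0\}$), $\mathrm{Pic}(V\times Z)$ is uncountable as well.

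The heart of the proof is a rigidity statement: two total spaces are algebraically isomorphic only if the underlying line bundles are related by a countable group action. For (i), since $V$ is non-rational, the line-bundle projection $X_L\to V$ is intrinsic (any $\C$-fibration of $X_L$ has non-uniruled base, which must then coincide with $V$ up to an automorphism); hence any algebraic isomorphism $X_{L_1}\xrightarrow{\sim}X_{L_2}$ descends to some $\sigma\in\mathrm{Aut}(V)$ and identifies $L_1$ with $\sigma^{*}L_2$ modulo a global unit. For (ii), the hypothesis that $Z$ is not $\C$-uniruled lets me recover the projection $V\times Z\to Z$ as the maximal non-$\C$-uniruled quotient of $X_L$, after which the argument of (i), applied fibrewise over the generic point of $Z$, yields the analogous rigidity statement over $V\times Z$.

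Finally, I count orbits. In (i), $\mathrm{Aut}(V)$ is at most countable, so an uncountable Picard group modulo a countable group action still has uncountably many orbits; this gives uncountably many pairwise non-isomorphic $X_L$'s, all biholomorphic to $Y$. For (ii), any non-trivial $L$ already produces one exotic algebraic structure on $Y\times Z$, which is the first assertion unconditionally; under the extra hypothesis that $\mathrm{Aut}(V\times Z)$ is at most countable, the same orbit count applied to the uncountable subgroup $p^{*}\mathrm{Pic}(V)\subseteq\mathrm{Pic}(V\times Z)$ produces uncountably many. The main obstacle is the rigidity step in (ii): showing that non-$\C$-uniruledness of $Z$ is enough to force any algebraic isomorphism of total spaces to respect the product decomposition $V\times Z$. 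This is where I expect to adapt the techniques of \cite{j2}, via a maximal-$\C$-family or MRC-type extraction of $Z$ as the canonical non-uniruled quotient, followed by a reduction to (i) on the $V$-factor.
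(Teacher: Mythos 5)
Your skeleton is essentially the paper's: realize the exotic structures as total spaces of algebraically non-trivial line bundles ${\bf L}$ on $V$ (resp.\ their pullbacks $\pi^*{\bf L}$ to $V\times Z$), note that these are analytically trivial so every total space is biholomorphic to $Y$ (resp.\ $Y\times Z$), show $Pic(V)$ is uncountable as the Jacobian of $\overline V$ modulo the countable subgroup coming from the punctures, check $\pi^*$ is injective by restricting to a slice, and then count orbits under the at most countable automorphism group, the pairwise non-isomorphy resting on a rigidity statement for isomorphisms of total spaces over a non-$\C$-uniruled base. One genuine (and pleasant) difference: your proof of analytic triviality via the exponential sequence ($H^1(V,\mathcal O_V)=0$ by Cartan B, $H^2(V,\Z)=0$ for an open Riemann surface) is a more direct substitute for the paper's route through the topological splitting theorem (Theorem \ref{h}) combined with Grauert's Oka principle.

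The step you defer is exactly the paper's Theorem \ref{tw1}: if the total spaces of two rank-$r$ algebraic vector bundles over a non-$\C$-uniruled smooth affine variety $X$ are isomorphic, then the bundles agree up to $\sigma^*$ for some $\sigma\in Aut(X)$; its proof reduces, after composing with a translation by a section so that the isomorphism fixes the zero section, to the fiber-preservation statement of Lemma 3.4 in \cite{j1}. Two corrections to your plan for (ii). First, the theorem is applied directly with base $X=V\times Z$, which is non-$\C$-uniruled because $V$ and $Z$ are; no MRC-type extraction of $Z$ and no fibrewise argument over the generic point of $Z$ is needed, and the phrase about recovering ``the projection $V\times Z\to Z$'' is off target: what must be recovered is the projection of the total space onto $V\times Z$. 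Second, your sketch glosses over the point that non-$\C$-uniruledness only forbids polynomial curves through \emph{general} points of the base, so one must still argue that \emph{every} fiber $\C$ of the total space, not just the general one, is carried into a fiber; this is precisely what the cited Lemma 3.4 of \cite{j1} supplies, and is why the paper isolates Theorem \ref{tw1} rather than arguing ad hoc. With that lemma granted, your orbit count (uncountable $Pic$ modulo countable, respectively finite, $Aut$) gives the conclusion exactly as in the paper.
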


As a Corollary we have:

\begin{co}
Let $\Gamma_1,...,\Gamma_r$ be a finite family of smooth affine
non-rational curves ($r\ge 1$) and put $X=\C\times \prod^r_{i=1}
\Gamma_i$. Then the Stein manifold $X$ has uncountable many
different structures of affine variety. In particular for every
$d>1$ there exists a Stein manifold of dimension $d$,  which
 has uncountable many different structures
of affine variety.
\end{co}

Moreover we obtain:

\begin{theo}\label{22}
Let $V$ be a  smooth affine surface, which has a smooth completion
$\overline{V}$, such that $H^0(\overline{V},
K_{\overline{V}})\not=0.$ Then

(i) the affine fourfold $X:=V\times \C^2$ has infinitely many
different exotic models.

(ii) for every non $\C-$uniruled smooth affine variety $Z$ the
variety $X\times Z$ has an exotic model. Moreover, if the group
$Aut(V\times Z)$ is finite, then  the Stein $n-$fold $X\times Z$
has infinitely many different structures of affine variety.
\end{theo}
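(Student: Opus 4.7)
The plan is, for each integer $n \ge 1$, to construct a rank-$2$ algebraic vector bundle $E_n$ on $V$ whose total space $Y_n := \mathrm{Tot}(E_n)$ is biholomorphic to $V \times \C^2$, but so that infinitely many of the $Y_n$ are pairwise non-isomorphic as algebraic varieties; in part (ii) we then pull back the $E_n$ along the projection $V \times Z \to V$ and argue on the base $V \times Z$. The biholomorphy $Y_n \cong V \times \C^2$ reduces to the analytic triviality of $E_n$; since $V$ is Stein, Grauert's Oka principle reduces this to a topological condition on $c_1$ and $c_2$, and since $V$ is a complex $2$-dimensional affine variety, Andreotti--Frankel gives $H^4(V, \Z) = 0$, which kills $c_2$ automatically. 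One therefore only needs to arrange $c_1(E_n) = 0$ in $H^2(V, \Z)$.

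To construct the bundles, apply the Serre construction on a smooth completion $\overline{V}$ of $V$. The hypothesis $H^0(\overline{V}, K_{\overline{V}}) \ne 0$ plays two roles: by Serre duality it yields $H^2(\overline{V}, \mathcal{O}_{\overline{V}}) \ne 0$, producing non-split extensions
\[
0 \to \mathcal{O}_{\overline{V}} \to E_n \to \mathcal{I}_{Z_n} \to 0
\]
for $0$-dimensional subschemes $Z_n \subset V$ of length $n$ in sufficiently general position (the Cayley--Bacharach condition with respect to $K_{\overline{V}}$ being automatic from the existence of a non-zero canonical form); and it forces $V$ to be non-$\C$-uniruled. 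The extension has $c_1 = 0$ and $c_2 = n$ on $\overline{V}$; restricting to $V$ kills $c_2$ and preserves $c_1 = 0$, so $E_n|_V$ is analytically trivial and $Y_n$ is biholomorphic to $V \times \C^2$.

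The key algebraic step is that the isomorphism class of $Y_n$ determines $E_n|_V$ up to pull-back by an element of $\mathrm{Aut}(V)$ (resp.\ of $\mathrm{Aut}(V \times Z)$ in part (ii)). Non-$\C$-uniruledness of $V$, and hence of $V \times Z$ which inherits this property from both factors, implies that every algebraic map $\C^2 \to V$ (resp.\ $\C^2 \to V \times Z$) is constant; applied to the $\C^2$-fibres of the projection $\pi_n: Y_n \to V$, it follows that any algebraic isomorphism $\phi: Y_n \to Y_m$ collapses these fibres to points and descends to an $\alpha \in \mathrm{Aut}(V)$ satisfying $E_n \cong \alpha^* E_m$.

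For part (ii), the finiteness of $\mathrm{Aut}(V \times Z)$ then gives immediately, by pigeon-hole, infinitely many pairwise non-isomorphic models, once the family $\{E_n|_V\}$ contains infinitely many algebraic isomorphism classes. Part (i) follows similarly provided $\mathrm{Aut}(V)$ has infinitely many orbits on $\{[E_n|_V]\}$. This last point is what I expect to be the main obstacle: the distinguishing invariant $c_2 = n$ lives on the completion $\overline{V}$ but vanishes on $V$, so one must identify an intrinsic algebraic invariant of $E_n|_V$---for instance, the minimal length of a $0$-cycle $W \subset V$ for which $E_n|_V$ admits a Serre presentation, or a suitable quantity built from the restriction to $V$ of $\omega \in H^0(\overline{V}, K_{\overline{V}})$---that is preserved by $\mathrm{Aut}(V)$ yet separates infinitely many of the $E_n|_V$.
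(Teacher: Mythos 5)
Your construction and the reduction to analytic triviality (Grauert's Oka principle, vanishing of $H^4(V,\Z)$, $c_1=0$) match the paper's, and your descent step (a variety isomorphism of total spaces forces the bundles to agree up to pull-back by an automorphism of the base) is exactly the role of the paper's Theorem \ref{tw1}. But the argument has a genuine gap, and it is the one you yourself flag at the end: you never produce an algebraic invariant of the restricted bundles $E_n|_V$ that separates infinitely many of them, so you have not shown that your family contains more than finitely many algebraic isomorphism classes, let alone infinitely many $Aut$-orbits. Indeed every invariant you actually compute on $V$ (the topological $c_1$ and $c_2$) vanishes, and the number $n=c_2(E_n)$ on $\overline{V}$ is simply forgotten upon restriction. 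The paper's essential idea, which is missing from your proposal, is to work with Chern classes valued in the Chow groups of $V$ itself: by the Mumford--Roitman theorem (Theorem \ref{mum}), the hypothesis $H^0(\overline{V},K_{\overline{V}})\neq 0$ forces $A^2(V)\neq 0$, and by Roitman's torsion theorem \cite{roit2} one even has $A^2(V)\otimes\Q\neq 0$, so $A^2(V)$ is infinite. Serre's construction carried out directly over the affine surface (Lemma \ref{chern}, an extension $0\to L\to F\to I\to 0$ with $I$ an ideal of points) realizes any prescribed pair $c_1\in A^1(V)$, $c_2\in A^2(V)$ by a rank-two bundle; taking $c_1=0$ and $c_2\neq 0$ in $A^2(V)$ gives bundles that are topologically (hence holomorphically) trivial yet algebraically pairwise distinct, because $c_2\in A^2(V)$ is an algebraic invariant functorial under $Aut(V)$. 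Combined with Theorem \ref{tw1} and the finiteness of $Aut(V)$ for a non-uniruled affine surface \cite{jel3} (a point your part (i) also leaves unaddressed: you need infinitely many orbits, which the paper gets from finite $Aut(V)$ acting on an infinite $A^2(V)$), this yields infinitely many structures; part (ii) follows by pulling back and using injectivity of $\pi^*$ on $A^2$. So the hypothesis on $K_{\overline{V}}$ is used not merely to produce non-split extensions on $\overline{V}$, but, crucially, to make the Chow group of $0$-cycles of $V$ large; without some such invariant your $Y_n$ could a priori all be isomorphic.

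Two smaller inaccuracies are repairable but worth noting. First, non-$\C$-uniruledness does not literally imply that every algebraic map $\C^2\to V$ is constant (a non-uniruled surface may contain isolated polynomial curves); what is needed, and what the paper invokes, is the fiber-to-fiber statement of Theorem \ref{tw1}, proved via Lemma 3.4 of \cite{j1}, which uses that polynomial curves miss a dense open set of the base. Second, the Cayley--Bacharach condition in your Serre construction on $\overline{V}$ is not ``automatic from the existence of a non-zero canonical form''; it requires a choice of the $0$-cycle adapted to the canonical system. Working, as the paper does, with Serre's extension construction over the affine $V$ (where $\mathrm{Ext}^1_A(I,L)$ is handled directly \cite{se}) avoids this issue altogether.
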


\begin{re}
{\rm In particular we can take as $V$ (above) any generic surface
$V\subset \C^3$ of degree $d\ge 4.$}
\end{re}

\section{Exotic models}

Let us recall the definition of a $\C$-uniruled variety which was
introduced in our paper \cite{jel}. First recall that  {\it a
polynomial curve} in $X$ is the image of the affine line $A^1(\C)$
under a non-constant morphism $\phi : A^1(\C) \to X.$ Now we have:

\begin{defi}
{\rm An affine  variety $X$ is said to be $\C$-{\it uniruled} if it
is of dimension $\geq 1$ and there exists a Zariski open,
non-empty subset $U$ of $X$ such that for every point $x\in U$
there is a polynomial curve in $X$ passing through $x.$}
\end{defi}

It is well-known, that if $X$ is projectively smooth variety,
which is $\C-$uniruled, then $H^0(\overline{X},
K_{\overline{X}})=0$, where $K_{\overline{X}}$ denotes the
canonical divisor of $\overline{X}.$ In the sequel we need the
following  theorem (compare with \cite{j1}):

\begin{theo}\label{tw1}
Let $X$ be a non-$\C$-uniruled smooth affine variety.  Let $\bf F,
G$ be  algebraic vector bundles on $X$ of $\rank$ $r.$ If the
total space of $\bf F$ is isomorphic to the total space of $\bf
G$, then $\bf F$ is isomorphic to $\bf \sigma^*G$ for some
automorphism $\sigma\in Aut(X).$
\end{theo}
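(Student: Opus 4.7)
My plan is to use the non-$\C$-uniruledness of $X$ to force any total-space isomorphism $\phi\colon E(\mathbf{F})\to E(\mathbf{G})$ to preserve fibers over some $\sigma\in\mathrm{Aut}(X)$, and then to recover a genuine vector bundle isomorphism by differentiating along the zero section.

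Write $\pi_{\mathbf{F}},\pi_{\mathbf{G}}$ for the bundle projections, $s_{\mathbf{F}},s_{\mathbf{G}}$ for the zero sections, and $F_x,G_x$ for the fibers. Define the candidate base map $\sigma:=\pi_{\mathbf{G}}\circ\phi\circ s_{\mathbf{F}}\colon X\to X$. The central step is to prove $\pi_{\mathbf{G}}\circ\phi=\sigma\circ\pi_{\mathbf{F}}$, i.e.\ $\phi$ sends $F_x$ into $G_{\sigma(x)}$. For each $x\in X$ consider $f_x:=\pi_{\mathbf{G}}\circ\phi|_{F_x}\colon F_x\cong\C^r\to X$. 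By generic constancy of fiber dimension (applied to the projection to $X$ of the constructible image of $(\pi_{\mathbf{F}},\pi_{\mathbf{G}}\circ\phi)$) there is a Zariski-dense open $U\subset X$ on which $\dim f_x(F_x)$ is independent of $x\in U$; I claim this common value is zero. Otherwise, for every $x\in U$ and every $v\in F_x$ one can find a line through $v$ on which $f_x$ is non-constant, producing a polynomial curve in $X$ through $f_x(v)$. Hence the set $\pi_{\mathbf{G}}(\phi(\pi_{\mathbf{F}}^{-1}(U)))=\bigcup_{x\in U}f_x(F_x)$ lies entirely in the locus of $X$ consisting of points admitting polynomial curves. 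But this set is Zariski-open in $X$, because $\phi$ is an isomorphism and $\pi_{\mathbf{G}}$ is open (being a vector-bundle projection), contradicting the hypothesis that $X$ is non-$\C$-uniruled. Therefore $f_x$ is constant for $x\in U$, which gives $\pi_{\mathbf{G}}\circ\phi=\sigma\circ\pi_{\mathbf{F}}$ on $\pi_{\mathbf{F}}^{-1}(U)$ and, by density, on all of $E(\mathbf{F})$.

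Running the identical argument for $\phi^{-1}$ yields a morphism $\tau\colon X\to X$ inverse to $\sigma$, so $\sigma\in\mathrm{Aut}(X)$. To upgrade the fibered isomorphism to a vector bundle isomorphism $\mathbf{F}\cong\sigma^*\mathbf{G}$, set
\[
\phi'(e):=\phi(e)-\phi\bigl(s_{\mathbf{F}}(\pi_{\mathbf{F}}(e))\bigr),
\]
using the additive structure of the fibers of $\mathbf{G}$. Then $\phi'$ is still a fibered isomorphism over $\sigma$ (subtracting an algebraic section is always a total-space isomorphism) and it carries $s_{\mathbf{F}}(X)$ onto $s_{\mathbf{G}}(\sigma(X))$. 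The differential of $\phi'$ along the zero section therefore produces a linear isomorphism between the normal bundles of $s_{\mathbf{F}}(X)\subset E(\mathbf{F})$ and $s_{\mathbf{G}}(\sigma(X))\subset E(\mathbf{G})$, and these are canonically identified with $\mathbf{F}$ and $\sigma^*\mathbf{G}$ respectively.

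The main obstacle is the fibering step: one must verify both that a non-constant morphism $\C^r\to X$ forces a polynomial curve through each point of its image, and that openness of vector-bundle projections promotes this set-theoretic containment into a Zariski-open subset of $X$ that witnesses $\C$-uniruledness. Once the fibering is in place, the symmetry argument for $\sigma^{-1}$ and the linearization via the derivative on the zero section are essentially formal.
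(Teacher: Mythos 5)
Your argument is correct and its skeleton is the same as the paper's: force the total-space isomorphism $\phi$ to respect fibers over a base map $\sigma$, check $\sigma\in\mathrm{Aut}(X)$, translate away the image of the zero section, and differentiate along the zero section, identifying $\mathbf F$ and $\sigma^*\mathbf G$ with the normal bundles of the zero sections. The genuine difference is the fibering step, which is exactly where non-$\C$-uniruledness enters: the paper disposes of it by citing Lemma 3.4 of \cite{j1}, while you prove it directly --- if $f_x=\pi_{\mathbf G}\circ\phi|_{F_x}$ were non-constant for all $x$ in a dense open $U$, then through every point of the non-empty Zariski-open set $\pi_{\mathbf G}(\phi(\pi_{\mathbf F}^{-1}(U)))$ there would pass a polynomial curve (the image of a suitable line in a fiber), contradicting the hypothesis. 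This makes the proof self-contained, which is a real gain; the one place to tighten is the appeal to ``generic constancy of fiber dimension'': $x\mapsto\dim f_x(F_x)$ is the dimension of the image of a fiber, not of a fiber of the closure of the image of $(\pi_{\mathbf F},\pi_{\mathbf G}\circ\phi)$, so the semicontinuity you invoke is not available off the shelf (the fiber of the closure can be strictly larger than the closure of $\{x\}\times f_x(F_x)$). The claim is true, and the cheapest repair is to note that the locus $\{x: f_x\ \mbox{constant}\}$ is closed: its complement is $\pi_{\mathbf F}$ applied to the open set of points $e$ at which the fiber of $(\pi_{\mathbf F},\pi_{\mathbf G}\circ\phi)$ through $e$ has local dimension $<r$, and $\pi_{\mathbf F}$ is an open map; this gives exactly the dichotomy your argument needs (either $f_x$ is constant for every $x$, or it is non-constant on a non-empty open set). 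Two smaller deviations are, if anything, cleaner than the paper: you obtain $\sigma\in\mathrm{Aut}(X)$ by running the same argument for $\phi^{-1}$ and composing (the paper instead checks bijectivity and invertibility of $d_x\sigma$), and you avoid the paper's pullback-by-$\sigma$ normalization by phrasing the final differential as an isomorphism of normal bundles covering $\sigma$.
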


\begin{proof}
Let $F$ denote the total space of $\bf F$ and $G$ the total space
of $\bf G.$ In what follows, we will identify $X$ with the zero
section of $\bf F$ and $\bf G$. Note that
$${\bf F}\cong TF|_X/TX, \ {\bf G}\cong TG|_X/TX.$$
Assume that there exists an isomorphism $\Phi : F\to G.$ Let $\pi
: \bf G \to X$ be the projection and take $f=\pi\circ\Phi.$ Since
the vector bundle $\bf F$ is locally trivial in the Zariski
topology,  Lemma 3.4 in \cite{j1}  shows that $\Phi({\bf F}_x)=
{\bf G}_{f(x)}$ for every $x\in X.$ Consequently, the mapping
$\sigma:=f|X : X \to X$ is an bijection. Moreover, it is easy to
check that for every $x\in X$ the derivative $d_x\sigma$ is an
isomorphism. Consequently the mapping $\sigma$ is an automorphism.
Take ${\bf G}'=\sigma^*{\bf G}.$ Let $\Sigma: G\to G'$ be the
induced isomorphism of total spaces (locally given as $U\times
\C^n\ni (x,v)\to (\sigma^{-1}(x),v)\in \sigma^{-1}(U)\times
\C^n).$ Replace $\Phi$ by $\Sigma\circ \Phi$ and ${\bf G}$ by
${\bf G}'$.

Now the mapping $\Phi_{|X} : X \in x \mapsto (x, t(x))\in  { G}$
is a section. Consider the isomorphism $\Psi : { G}\ni
(x,v)\mapsto (x, v-t(x))\in {\ G}.$ Again we can replace $\Phi$ by
$\Psi\circ \Phi$ to obtain $\Phi|X : X\times \{0\}\ni (x,0)\mapsto
(x,0)\in { G}.$ Hence we can assume that $\Phi$ transforms the
zero section into the zero section, and moreover it induces the
identity on the zero section. Hence $d\Phi(TX)=TX$ and the mapping
$$d\Phi :  TF|_X/TX\cong {\bf F}\to TG|_X/TX \cong {\bf G}$$
is an isomorphism. Consequently, the bundle $\bf F$ is isomorphic
to ${\bf G}$.
\end{proof}

Now we review some results about Stein manifolds. It is well-known
that a $n-$dimensional Stein manifold $X$ has the homotopy type of
a (real) $n-$dimensional CW complex ( see \cite{mil}). Complex
vector bundles on such CW complexes have tha following nice
property:

\begin{theo}\label{h}(\cite{hus}, p. 111)
Let $Y$ be a $r-$dimensional CW complex and let ${\bf F}$ be a
complex vector bundle on $Y$ of rank $k.$ If $r\le 2k-1$, then
$\bf F$ has a one dimensional trivial summand.
\end{theo}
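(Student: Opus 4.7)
The plan is to reduce the existence of a trivial line summand to the existence of a nowhere-vanishing global section of $\bf F$, and then to settle the latter by obstruction theory, exploiting the high connectivity of the fibre sphere $S^{2k-1}$.

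First I would observe that, once a Hermitian metric is chosen on $\bf F$, a nowhere-vanishing section $s: Y \to {\bf F}$ determines a rank-one trivial subbundle $\C \cdot s \subset {\bf F}$ whose orthogonal complement is a subbundle ${\bf F}'$ of rank $k-1$, and ${\bf F} \cong \C \oplus {\bf F}'$. So the problem reduces to showing that $\bf F$ has a nowhere-vanishing section whenever $\dim Y = r \le 2k-1$. Equivalently, I must produce a section of the associated unit sphere bundle $S({\bf F}) \to Y$, whose typical fibre is the $(2k-1)$-sphere.

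Next I would carry out the standard skeleton-by-skeleton extension argument on a CW structure of $Y$. A section defined over the $0$-skeleton exists trivially (pick any unit vector in each fibre). Having extended the section over the $(i-1)$-skeleton, the obstruction to extending it across each $i$-cell defines a cellular cochain with values in $\pi_{i-1}(S^{2k-1})$, and the total obstruction to extension lives in the cohomology group $H^{i}\bigl(Y;\pi_{i-1}(S^{2k-1})\bigr)$. Because $S^{2k-1}$ is $(2k-2)$-connected, one has $\pi_{i-1}(S^{2k-1}) = 0$ whenever $i-1 \le 2k-2$, i.e.\ whenever $i \le 2k-1$; for $i > r$ the cohomology itself vanishes by dimension. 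Since $r \le 2k-1$, both regimes together annihilate every obstruction, so the section extends all the way to $Y$.

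The main technical point—really the only delicate one—is the obstruction-theoretic step: one has to know that the sphere bundle $S({\bf F})$, although possibly non-trivial, still admits the usual obstruction theory in cohomology with coefficients in the homotopy groups of the fibre, because its structure group $U(k)$ acts on $S^{2k-1}$ in a manner that is simply transitive enough on $\pi_{i-1}(S^{2k-1})$ (indeed $U(k)$ acts trivially on these low homotopy groups in the stable range we use). Granting this, I would conclude by combining the vanishing of the obstructions with the metric splitting observation above, obtaining the desired decomposition ${\bf F} \cong \C \oplus {\bf F}'$. This matches the statement quoted from Husemoller.
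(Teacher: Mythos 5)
Your proof is correct; the paper offers no argument of its own for this statement (it is quoted from Husemoller), and your obstruction-theoretic construction of a nowhere-vanishing section of the sphere bundle $S({\bf F})$ — extending skeleton by skeleton, with obstructions in $H^{i}\bigl(Y;\pi_{i-1}(S^{2k-1})\bigr)$ vanishing because $S^{2k-1}$ is $(2k-2)$-connected and $r\le 2k-1$ — followed by splitting off the trivial line subbundle via a Hermitian metric, is exactly the standard proof found in the cited source. The coefficient-system subtlety you flag is harmless precisely because the homotopy groups $\pi_{i-1}(S^{2k-1})$ used are zero in the relevant range (and the structure group $U(k)$ is connected), so no further care is needed.
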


Now we are ready to prove our first  result:

\begin{theo}
Let $V$ be a non-rational smooth affine curve. Then

(i) the affine surface $Y:=V\times \C$ has uncountably many
different exotic models.

(ii)  for every non $\C-$uniruled smooth affine variety $Z$ the
variety $Y\times Z$ has an exotic model. Moreover, if the group
$Aut(V\times Z)$ is at most countable, then  the Stein $n-$fold
$X\times Z$ has uncountably  many different structures of affine
variety.

\end{theo}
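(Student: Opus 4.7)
The plan is to use algebraic line bundles on $V$ as a source of exotic models. By the Oka--Grauert principle all their total spaces will be biholomorphic to $V\times \C$; by Theorem~\ref{tw1}, which applies because $V$ is non-$\C$-uniruled (a polynomial curve in $V$ would give a non-constant morphism $\C\to V$, forcing $V$ to be rational), two such total spaces are algebraically isomorphic if and only if the underlying bundles are $\sigma^*$-equivalent for some $\sigma\in \mathrm{Aut}(V)$. Hence the main task is to show that $\mathrm{Pic}(V)/\mathrm{Aut}(V)$ is uncountable. The smooth projective completion $\overline V$ has genus $g\ge 1$, so $\mathrm{Pic}^0(\overline V)$ is a positive-dimensional abelian variety and is therefore uncountable; since $\mathrm{Pic}(V)$ is the cokernel of the map $\Z^{|\overline V\setminus V|}\to \mathrm{Pic}(\overline V)$ sending a puncture to its divisor class, $\mathrm{Pic}(V)$ remains uncountable. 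On the other hand $\mathrm{Aut}(V)$ embeds into the group of automorphisms of $\overline V$ preserving the finite puncture set, which is finite for $g\ge 1$.

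For part (i), I would observe that every line bundle on $V$ has total space biholomorphic to $Y=V\times \C$. Indeed $V$ is Stein of complex dimension $1$, hence has the homotopy type of a $1$-dimensional CW complex, and Theorem~\ref{h} with $r=k=1$ (equivalently $H^2(V;\Z)=0$) forces every topological complex line bundle on $V$ to be trivial; then Oka--Grauert upgrades this to holomorphic triviality, so the total space of $L$ is biholomorphic to $V\times \C$ as a complex manifold. Combined with Theorem~\ref{tw1} and the orbit count above, this produces uncountably many pairwise non-isomorphic smooth affine surfaces, each biholomorphic to $Y$, proving (i).

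For part (ii), for each $L\in \mathrm{Pic}(V)$ I would consider the pullback $p_V^*L$ on $V\times Z$, whose total space is the product of the total space of $L$ with $Z$, and hence biholomorphic to $V\times \C\times Z=Y\times Z$ by the argument of (i) applied to the $V$-factor. The product $V\times Z$ is itself non-$\C$-uniruled (any polynomial curve projects constantly to $V$ and must therefore lie in a fiber $\{v\}\times Z$, contradicting the non-uniruledness of $Z$), so Theorem~\ref{tw1} applies and says that the total spaces of $p_V^*L_1$ and $p_V^*L_2$ are algebraically isomorphic iff $p_V^*L_1\cong \sigma^*(p_V^*L_2)$ for some $\sigma\in \mathrm{Aut}(V\times Z)$. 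Any non-trivial $L\in \mathrm{Pic}(V)$ then yields an exotic model: $p_V^*$ is injective, because restriction to a fiber $V\times\{z\}$ is a left inverse, so $p_V^*L\not=\mathcal{O}$ while $\sigma^*\mathcal{O}=\mathcal{O}$. Under the additional hypothesis that $\mathrm{Aut}(V\times Z)$ is at most countable, every orbit in $\mathrm{Pic}(V\times Z)$ is countable, so the uncountable subset $p_V^*\mathrm{Pic}(V)$ meets uncountably many orbits and produces uncountably many distinct algebraic structures on the Stein manifold $Y\times Z$. The one genuinely delicate point to be checked carefully is the uncountability of $\mathrm{Pic}(V)$, which rests on the positive-dimensionality of $\mathrm{Pic}^0(\overline V)$ and the fact that a finitely generated subgroup of a positive-dimensional abelian variety cannot exhaust it; the rest is a clean three-step combination of Oka--Grauert, Theorem~\ref{tw1}, and orbit counting.
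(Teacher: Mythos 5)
Your proposal is correct and follows essentially the same route as the paper: topological triviality of line bundles on the curve plus Oka--Grauert to get biholomorphic total spaces, Theorem~\ref{tw1} to distinguish algebraic structures up to the $\mathrm{Aut}$-action, uncountability of $\mathrm{Pic}(V)$ from $\mathrm{Pic}(\overline V)$ modulo the countable subgroup generated by the punctures, and pullback along the projection plus orbit counting for part (ii). The only nitpick is your non-uniruledness argument for $V\times Z$: a single polynomial curve in a fiber $\{v\}\times Z$ does not by itself contradict non-$\C$-uniruledness of $Z$ (which only concerns generic points), so one should note that $\C$-uniruledness of $V\times Z$ would force polynomial curves through generic points of a generic fiber, hence $\C$-uniruledness of $Z$ --- a one-line fix of a point the paper itself leaves implicit.
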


\begin{proof}
(i) Let $\overline{V}$ be a smooth compactification of $V$ and
$\{x_1,...,x_r\}=\overline{V}\setminus V.$ Then
$Pic(V)=Pic(\overline{V})/<x_1,...,x_r>.$ Since the subgroup
$<x_1,...,x_r>$ is countable and $Pic(\overline{V})$ not, we have
$Pic(V)\not=0$ - in fact this group is uncountable. Let ${\bf
L}\in Pic(V)$ be a non-zero line bundle. Hence it is algebraically
non-trivial. However, by Theorem \ref{h} $\bf L$ is
holomorphically trivial. Consequently the total space of any line
bundle ${\bf L}\in Pic(X)$ is biholomorphic to $Y.$

Note that the total space of every line bundle ${\bf L}\in Pic(V)$
determines one affine structure $Y_{\bf L}$ on $Y.$ Let $\rho$ be
the relation on $Pic(V)$ such that ${\bf L}$ is in a relation with
${\bf L}'$ if and only if there exists an automorphism $\sigma\in
Aut(V)$ : ${\bf L}'= \sigma^*{\bf L}.$ Since the group $Aut(V)$ is
finite, we see that the set $S:=Pic(V)/\rho$ is uncountable.
Denote the class of relation of ${\bf L}$ with respect to relation
$\rho$ by $[{\bf L}]$.

 Structures $Y_{\bf L}$ and $Y_{{\bf L}'}$ are
not isomorphic for $[{\bf L}]\not=[{\bf L}']$ by Theorem
\ref{tw1}. This means that there is at least $\#S$ different
affine structures on $Y.$

(ii) Let $\pi: V\times Z\to V$ be a projection. Take ${\bf
L}'=\pi^*({\bf L}).$ Then $\bf L$ is holomorphically trivial.
However, it is algebraically non-trivial. Indeed, take a point
$z\in Z. $ If we identify $V$ with $V\times \{z\}\subset V\times
Z$, then ${\bf L}'_{|V}={\bf L}.$ Now we can finish as above.

Note that the mapping $\pi^* : Pic(V)\ni {\bf L} \to \pi^*{\bf
L}\in Pic(V\times Z)$ is injective, hence the group $Pic(V\times
Z)$ is uncountable. If the group $Aut(V\times Z)$ is at most
countable, then the set $S'=A^2(V\times Z)/\rho$\ (where $\rho$ is
the relation as above) is uncountable. Now we can finish as above.
\end{proof}

\begin{co}
Let $\Gamma_1,...,\Gamma_r$ be a finite family of smooth affine
non-rational curves ($r\ge 1$) and put $X=\C\times \prod^r_{i=1}
\Gamma_i$. Then the Stein manifold $X$ has uncountable many
different structures of affine variety. In particular for every
$d>1$ there exists a Stein manifold of dimension $d$,  which
 has uncountable many different structures
of affine variety.
\end{co}

\begin{proof}
Let us note that $\overline{\kappa}(\Gamma_i)=1$ (where
$\overline{\kappa}$ denotes the logarithmic Kodaira dimension). We
have $\overline{\kappa}(\prod^r_{i=1} \Gamma_i)=\sum^r_{i=1}
\overline{\kappa}(\Gamma_i)=r$ (see \cite{iit}, Theorem 11.3).
Hence the variety $\prod^r_{i=1} \Gamma_i$ is of general type and
consequently it has a finite automorphisms group (see Theorem
11.12 in \cite{iit}).
\end{proof}

We show that  our method can be applied also to affine surfaces.
The following Lemma is well known:

\begin{lem}\label{chern}
Let $X$ be a smooth affine surface. Let $A^p(X)$ denotes the group
of codimension $p$-cycles modulo rational equivalence. Let $c_1\in
A^1(X), \ c_2\in A^2(X).$ Then there exists an algebraic vector
bundle $\bf F$ of rank $2$, such that $c_i({\bf F})=c_i$ for
$i=1,2,$ where $c_i(\bf F)$ is an $i^{th}$ Chern class of $\bf F$.
\end{lem}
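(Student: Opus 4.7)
The plan is to realize the desired Chern classes via the Hartshorne--Serre construction. Since $X$ is smooth, $A^1(X) = \mathrm{Pic}(X)$, so I would first pick a line bundle $\mathcal{L}$ on $X$ with $c_1(\mathcal{L}) = c_1$. I would then reduce to the case in which $c_2$ is represented by an effective $0$-cycle $Z$: write $c_2 = [\alpha_+] - [\alpha_-]$ with $\alpha_\pm$ effective; use Bertini on the affine surface $X$ to find a smooth affine curve $C \subset X$ through the support of $\alpha_-$; exploit that on a smooth affine curve every class in $\mathrm{Pic}(C)$ admits an effective representative (via Jacobi inversion applied to the smooth completion $\overline{C}$, after adjusting degree by points at infinity) to produce an effective $\beta$ on $C$ with $\beta + \alpha_-$ principal on $C$. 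This linear equivalence on $C$ pushes to the relation $[\beta] = -[\alpha_-]$ in $A^2(X)$, so $Z := \alpha_+ + \beta$ is effective and represents $c_2$. Since $X$ is smooth of dimension $2$, $Z$ is automatically a local complete intersection in $X$.

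Now apply the Serre construction: I would seek an extension
\[
0 \longrightarrow \mathcal{O}_X \longrightarrow \mathcal{F} \longrightarrow \mathcal{I}_Z \otimes \mathcal{L} \longrightarrow 0
\]
with $\mathcal{F}$ locally free of rank $2$. By the standard Cayley--Bacharach-style criterion, such an $\mathcal{F}$ exists precisely when the extension class in $\mathrm{Ext}^1(\mathcal{I}_Z \otimes \mathcal{L}, \mathcal{O}_X)$ locally generates the sheaf $\mathcal{E}xt^1(\mathcal{I}_Z \otimes \mathcal{L}, \mathcal{O}_X)$ (which is a line bundle on $Z$) at every point of $Z$. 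The affineness of $X$ makes this automatic, since every coherent sheaf on an affine scheme is globally generated; one can thus choose an extension class that is a local generator at each of the finitely many points of $Z$. The Chern classes of the resulting $\mathcal{F}$ are then extracted from the Whitney formula: using $c(\mathcal{I}_Z) = 1 + [Z]$ and the twist relations for a rank-$1$ sheaf, one obtains $c_1(\mathcal{F}) = c_1(\mathcal{L}) = c_1$ and $c_2(\mathcal{F}) = [Z] = c_2$.

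I expect the main obstacle to be the Cayley--Bacharach verification that the Serre extension is locally free --- this is the step where affineness is indispensable, via the global generation of the Ext-sheaf. The reduction to an effective representative of $c_2$ is a brief Bertini/Jacobi inversion argument on an affine curve inside $X$, and the final Chern class computation is purely formal once the short exact sequence is in place.
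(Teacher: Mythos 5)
Your argument is correct and follows essentially the same route as the paper: represent $c_2$ by an effective zero-dimensional subscheme, take a line bundle with first Chern class $c_1$, and apply the Serre construction, with affineness guaranteeing an extension class that generates the relevant $\mathcal{E}xt^1$ locally (the paper's sequence $0\to L\to F\to I\to 0$ is your sequence with the twist placed on the sub rather than the quotient, which gives the same Chern classes). The paper's proof is simply a terser version of this; your Bertini/Jacobi-inversion reduction to an effective representative of $c_2$ and the local-freeness check are exactly the details it leaves implicit.
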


\begin{proof}
Let $X=Spec(A).$ Let $\bf L$ be a line bundle which correspond to
$c_1.$ Moreover, let $A/I$ represent $c_2$, where $I$ is a product
of different maximal ideals. Then $Ext^1_A(I, L)$ is cyclic, where
$L$ is a module of sections of $\bf L.$ Following \cite{se} we get
an exact sequence
$$0\to L \to F \to I \to 0,$$ where $F$ is a projective module of
rank $2.$ If $\bf F$ is a vector bundle which correspond to $F$,
we get  $c_i({\bf F})=c_i$ for $i=1,2.$
\end{proof}

We have also:

\begin{lem}\label{wiazka}
Let $V$ be a  smooth affine surface, which has a smooth completion
$\overline{V}$, such that $H^0(\overline{V},
K_{\overline{V}})\not=0.$  For a given non zero class $c_2\in
A^2(X),$  there exists an algebraic vector bundle $\bf F$ of rank
two on $V$, which is algebraically non-trivial, but
holomorphically trivial and $c_2({\bf F})=c_2$.
\end{lem}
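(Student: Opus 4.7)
The plan is to realize ${\bf F}$ via the Serre--type construction of Lemma \ref{chern} with prescribed Chern classes $c_1 = 0$ and $c_2$ as given, and then to use the topological simplicity of the Stein surface $V$ together with Cartan's Theorem B to upgrade the resulting bundle to one that is holomorphically trivial while still being algebraically non--trivial.

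First I would apply Lemma \ref{chern} with the data $(c_1, c_2) = (0, c_2) \in A^1(V) \oplus A^2(V)$ to obtain an algebraic rank--two vector bundle ${\bf F}$ on $V$ satisfying $c_1({\bf F}) = 0$ and $c_2({\bf F}) = c_2$. Algebraic non--triviality of ${\bf F}$ is then immediate: a trivial rank--two bundle has vanishing $c_2$, whereas $c_2 \ne 0$ by assumption, so ${\bf F}$ cannot be algebraically trivial.

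Next I would prove that ${\bf F}$ is holomorphically trivial. Since $V$ is a Stein manifold of complex dimension $2$, by the Andreotti--Frankel theorem it has the homotopy type of a real $2$--dimensional CW complex. Theorem \ref{h} then applies with $r = 2$ and $k = 2$ (so that $r \le 2k-1$) and yields a holomorphic splitting ${\bf F} \cong {\bf L} \oplus \mathcal{O}_V$ for some holomorphic line bundle ${\bf L}$ on $V$. Multiplicativity of the total Chern class gives that the topological first Chern class of ${\bf L}$ equals that of ${\bf F}$, which is the image of the algebraic class $c_1({\bf F}) = 0 \in A^1(V)$ under the cycle map $A^1(V) \to H^2(V, \Z)$, and hence vanishes. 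By the exponential exact sequence together with Cartan's Theorem B (which gives $H^i(V, \mathcal{O}) = 0$ for $i \ge 1$ on a Stein manifold), the Chern class map induces an isomorphism between holomorphic line bundles on $V$ and $H^2(V, \Z)$; therefore ${\bf L}$ is holomorphically trivial, and hence so is ${\bf F}$.

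The main obstacle is the transition from the algebraic to the analytic category: one must ensure that the algebraic vanishing of $c_1({\bf F})$ in $A^1(V)$ genuinely forces the topological Chern class of the holomorphic line summand ${\bf L}$ to vanish, and then that topological triviality of a holomorphic line bundle on a Stein surface implies holomorphic triviality. Both steps are standard but rest on the naturality of the cycle class map and on the vanishing of $H^i(V, \mathcal{O})$ for $i \ge 1$; the hypothesis $H^0(\overline{V}, K_{\overline{V}}) \ne 0$ does not appear to enter this lemma directly, and seems to be retained as the standing hypothesis of the theorem for which this lemma is being prepared.
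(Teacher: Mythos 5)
Your construction of ${\bf F}$ via Lemma \ref{chern} and your argument for algebraic non-triviality (nonzero $c_2$) match the paper. The gap is in the holomorphic triviality step. Theorem \ref{h} is a statement about complex vector bundles on a CW complex in the \emph{topological} category; applied to $V$ (viewed, via Andreotti--Frankel, as a real $2$-dimensional complex) it yields only a topological trivial summand, i.e.\ ${\bf F}\cong {\bf L}\oplus \underline{\C}$ as topological complex bundles. It does not produce a holomorphic line bundle ${\bf L}$ nor a holomorphic splitting, as you assert. With only the topological splitting, your Chern-class computation shows that ${\bf F}$ is \emph{topologically} trivial, and your exponential-sequence/Theorem B argument --- correct as far as it goes, but applicable only to line bundles --- cannot then upgrade the rank-two bundle ${\bf F}$ itself to a holomorphically trivial one. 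The missing ingredient is Grauert's Oka principle: on a Stein manifold the holomorphic and topological classifications of vector bundles coincide, so topological triviality of ${\bf F}$ implies holomorphic triviality. This is precisely how the paper argues: it first invokes Grauert's theorem, then works topologically, splits off a trivial summand by Theorem \ref{h}, and uses triviality of the determinant $\wedge^2{\bf F}$ to identify ${\bf L}$ with the trivial bundle. (Alternatively one can obtain a genuinely holomorphic splitting: a nowhere-vanishing holomorphic section exists by the Oka principle for sections, and the resulting extension $0\to \mathcal{O}\to {\bf F}\to {\bf L}\to 0$ splits by Cartan's Theorem B since $H^1(V,{\bf L}^*)=0$; but this route again rests on an Oka-type theorem, not on Theorem \ref{h} alone, so the appeal to Grauert cannot be avoided.)

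Your side remark about the hypothesis $H^0(\overline{V},K_{\overline{V}})\not=0$ is fair: in the paper it enters only through Theorem \ref{mum}, to guarantee that nonzero classes $c_2\in A^2(V)$ exist at all; once $c_2$ is given, it plays no further role in the proof of the lemma.
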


\begin{proof}
Note that $H^0(\overline{V}, K_{\overline{V}})\not=0.$  By Theorem
\ref{mum} we have $A^2(V)\not=0.$ Take nonzero $c_2\in A^2(V).$ By
Lemma \ref{chern} there is an algebraic vector bundle $\bf F$ of
rank $2$ such that $c_1({\bf F})=0$ and $c_2({\bf F})=c_2.$ In
particular $\bf F$ is algebraically non-trivial and it has trivial
determinant.

We show that $\bf F$ is holomorphically trivial. We will make use
of Grauert's theorem on the Oka principle for vector bundles which
says that on Stein spaces the holomorphic and topological
classifications coincide. Therefore we can use the topological
theory of complex vector bundles. Moreover, since every
$n-$dimensional Stein manifold has a homotopy type of a (real)
$n-$dimensional CW complex, if we study vector bundles on $V$, we
can assume that $V$ itself is a $2-$dimensional CW complex. In
particular by Theorem \ref{h} we have $\bf F= {\bf L}\oplus {\bf
E}^1$, where ${\bf E}^1$ denotes the trivial line bundle. Since
${\bf E}^1=\wedge^2 {\bf F}={\bf L}\otimes {\bf E}^1={\bf L}$ we
have ${\bf F}={\bf E}^1\oplus {\bf E}^1$ is holomorphically
trivial.
\end{proof}

Moreover, we need  the following result of Mumford and Roitman (
see \cite{mum}, \cite{roit}):

\begin{theo}\label{mum}
Let $X$ be an irreducible, proper, non-singular variety of
dimension $d$ over $\C$, such that $H^0(X; K_X) \not= 0,$ where
$K_X$ is the canonical divisor of $X.$ Then for any affine open
subset $V \subset X$, we have $A^d(V ) \not= 0.$
\end{theo}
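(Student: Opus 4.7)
The plan is to combine the localization exact sequence in the Chow theory with the classical Mumford-Roitman infinite-dimensionality theorem for the Chow group of $0$-cycles. Set $Y := X \setminus V$. Since $X$ is irreducible, proper, of dimension $d$, and $V$ is a nonempty affine open subset (hence dense), $Y$ is a proper closed subvariety with $\dim Y \le d-1$. The localization exact sequence yields
$$CH_0(Y) \longrightarrow CH_0(X) \longrightarrow CH_0(V) \longrightarrow 0,$$
and since $A^d(V) = CH_0(V)$, the problem reduces to showing that the pushforward $CH_0(Y) \to CH_0(X)$ fails to be surjective.

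The next step is to invoke Mumford's theorem (generalized to arbitrary dimension by Roitman): if $X$ is smooth proper with $H^0(X, K_X) \neq 0$, then $CH_0(X)$ is not ``representable''. More precisely, over $\C$ it has uncountable cardinality, and the image in $CH_0(X)$ of any regular map from a finite-dimensional variety $T$ arising from an algebraic family of $0$-cycles is at most countable.

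I would then observe that $CH_0(Y)$ is exhausted as a countable union, over bidegrees $(n,m)$, of images of the symmetric products $\mathrm{Sym}^n(Y) \times \mathrm{Sym}^m(Y)$, each of which is a finite-dimensional algebraic variety (its dimension being bounded by $(n+m)(d-1)$). Pushing forward to $CH_0(X)$ via the closed inclusion yields a countable collection of subsets each with countable image, so the total image is countable and hence a proper subset of the uncountable group $CH_0(X)$; consequently $CH_0(V) \neq 0$.

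The main obstacle is the Mumford-Roitman theorem itself, which is the nontrivial external input. Its proof constructs, for each $\omega \in H^0(X, K_X)$, an obstruction detecting rational equivalence classes: one pulls $\omega$ back under the natural map $\mathrm{Sym}^n(X) \to CH_0(X)$ and carries out an infinitesimal calculation showing that this pullback is nonzero on suitable tangent vectors, which rules out any finite-dimensional parameterization of $CH_0(X)$. Once that technical input is accepted, the reduction above is essentially the cardinality argument.
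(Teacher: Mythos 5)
There is a genuine gap, and it sits exactly at the step your reduction leans on. (Note that the paper itself gives no proof of this statement: it is quoted as an external result of Mumford and Roitman, so you are in effect trying to reprove their theorem.) Your localization step is fine: with $Y=X\setminus V$ one gets $CH_0(Y)\to CH_0(X)\to CH_0(V)\to 0$ and $A^d(V)=CH_0(V)$, so the issue is whether $CH_0(Y)\to CH_0(X)$ can be surjective. But your way of ruling this out -- ``the image of each $\mathrm{Sym}^n(Y)\times\mathrm{Sym}^m(Y)$ in $CH_0(X)$ is countable, hence the image of $CH_0(Y)$ is countable'' -- is false, and so is the version of the Mumford--Roitman theorem you quote (``the image in $CH_0(X)$ of any algebraic family of $0$-cycles is at most countable''). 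Indeed, the image of the single family $X\ni x\mapsto [x]-[x_0]\in CH_0(X)$ is uncountable precisely in the infinite-dimensional situation; that is the whole point of Mumford's theorem. For a concrete counterexample to your key claim in the exact setting of the statement, take $X=E_1\times E_2$ a product of elliptic curves (so $K_X$ is trivial and $H^0(X,K_X)\neq 0$) and let $Y$ be an ample curve containing $E_1\times\{pt\}$, so that $V=X\setminus Y$ is affine. Composing with the Albanese map $CH_0(X)_{\deg 0}\to E_1\times E_2$ shows that the image of $J(E_1\times\{pt\})\subset CH_0(Y)$ in $CH_0(X)$ is already uncountable. So the pushforward image is not countable, and your cardinality argument cannot conclude.

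What is actually needed is the stronger, genuinely geometric statement proved by Mumford (for surfaces) and Roitman (in general): if $H^0(X,K_X)\neq 0$ then $CH_0(X)$ cannot be \emph{supported} on any proper closed subvariety $Y\subsetneq X$, i.e.\ $CH_0(Y)\to CH_0(X)$ is never surjective. This is essentially the theorem itself, and its proof is not a counting argument: one considers the countably many closed subsets of $\mathrm{Sym}^n(X)$ consisting of effective cycles rationally equivalent to cycles supported on $Y$ (plus a bounded correction), uses the uncountability of $\C$ to reduce to a single algebraic family, and then applies Mumford's induced-differential-form/fiber-dimension argument with a nonzero $\omega\in H^0(X,K_X)$ to get a contradiction (alternatively, a Bloch--Srinivas decomposition-of-the-diagonal argument shows that support on $Y$ would force $H^0(X,K_X)=0$). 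Also note that even if each image $\mathrm{Sym}^n(Y)\times\mathrm{Sym}^m(Y)\to CH_0(X)$ were a proper subset, a countable union of proper subsets can still exhaust a group, so some argument of the above type is unavoidable; countability bookkeeping alone does not substitute for it.
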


Finally we have:

\begin{theo}\label{surface}
Let $V$ be a  smooth affine surface, which has a smooth completion
$\overline{V}$, such that $H^0(\overline{V},
K_{\overline{V}})\not=0.$ Then

(i) the affine fourfold $X:=V\times \C^2$ has infinitely many
different exotic models.

(ii) for every non $\C-$uniruled smooth affine variety $Z$ the
variety $X\times Z$ has an exotic model. Moreover, if the group
$Aut(V\times Z)$ is finite, then  the Stein $n-$fold $X\times Z$
has infinitely many different structures of affine variety.
\end{theo}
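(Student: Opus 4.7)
The plan is to imitate the proof of Theorem 2.4 (the curve case) verbatim, trading line bundles on a non-rational curve for rank-two algebraic vector bundles on $V$, and trading $Pic(V)$ for the Chow group $A^2(V)$. Lemma \ref{wiazka} provides the rank-two analogue of the statement ``every element of $Pic$ of a non-rational affine curve is holomorphically trivial,'' and Theorem \ref{mum} (together with the standard Mumford strengthening to infinite-dimensionality) plays the role of ``$Pic(\overline{V})$ uncountable'' for non-rational curves. Theorem \ref{tw1} then detects when two total spaces are isomorphic as affine varieties, provided the base is non-$\C$-uniruled; the hypothesis $H^0(\overline{V}, K_{\overline{V}}) \ne 0$ is exactly what guarantees non-$\C$-uniruledness of $V$.

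For (i), by Theorem \ref{mum} the group $A^2(V)$ is nonzero, in fact uncountable. For each nonzero $c \in A^2(V)$, Lemma \ref{wiazka} produces a rank-two bundle ${\bf F}_c$ on $V$ with $c_1({\bf F}_c)=0$ and $c_2({\bf F}_c)=c$, which is algebraically non-trivial yet holomorphically trivial. Hence the total space of ${\bf F}_c$ is biholomorphic to $V\times\C^2$ but cannot be isomorphic to it as an affine variety, for such an isomorphism would by Theorem \ref{tw1} force ${\bf F}_c \cong \sigma^*({\bf E}^2) = {\bf E}^2$, contradicting algebraic non-triviality. I then introduce the relation $\rho$ on $A^2(V)\setminus\{0\}$ by $c \sim c'$ iff $c = \sigma^* c'$ for some $\sigma \in Aut(V)$. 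By Theorem \ref{tw1} and functoriality of Chern classes, two classes $c,c'$ yield isomorphic total spaces precisely when $c\rho c'$. Since $V$ is non-$\C$-uniruled, $Aut(V)$ has only zero-dimensional components (a positive-dimensional family would induce a nontrivial $\C$-action and hence polynomial curves through general points of $V$), so $Aut(V)$ is countable, each $\rho$-class is countable, and the quotient is uncountable. This gives uncountably (hence infinitely) many exotic models of $V\times\C^2$.

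For (ii), with $Z$ non-$\C$-uniruled the product $V\times Z$ is again non-$\C$-uniruled. Letting $\pi : V\times Z \to V$ be the projection, I set ${\bf G}_c := \pi^*{\bf F}_c$. This pullback is holomorphically trivial and algebraically non-trivial (restriction to a slice $V\times\{z_0\}$ recovers ${\bf F}_c$, and the restriction of a trivial bundle is trivial), so its total space is biholomorphic to $V\times\C^2\times Z = X\times Z$ yet gives an exotic model of $X\times Z$. When $Aut(V\times Z)$ is finite, the same relation $\rho$ applied on the injective pullback image $\pi^*(A^2(V)\setminus\{0\}) \subset A^2(V\times Z)$ has finite equivalence classes inside an infinite set, and by Theorem \ref{tw1} distinct classes produce non-isomorphic affine structures on $X\times Z$, giving infinitely many.

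The main obstacle is the counting step in (i): one must invoke the full Mumford--Roitman theorem, not merely the non-vanishing stated in Theorem \ref{mum}, to know that $A^2(V)$ is uncountable, and one must verify that $Aut(V)$-orbits remain countable (using that $V$ being non-$\C$-uniruled forces $Aut(V)$ to be a countable group) so that the quotient $(A^2(V)\setminus\{0\})/\rho$ is still infinite. In (ii) the explicit finiteness of $Aut(V\times Z)$ bypasses this issue cleanly.
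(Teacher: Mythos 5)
Your strategy is the same as the paper's: use Lemma \ref{wiazka} to produce rank-two bundles on $V$ with $c_1=0$ and prescribed nonzero $c_2$ that are algebraically nontrivial but holomorphically trivial, let Theorem \ref{tw1} (applicable since $H^0(\overline V,K_{\overline V})\neq 0$ forces non-$\C$-uniruledness) show that isomorphic total spaces force the $c_2$'s to lie in one $Aut$-orbit, and count orbits; part (ii) via $\pi^*$ and restriction to a slice $V\times\{z\}$ is exactly the paper's argument. The gap is in your counting step for (i). You justify countability of $Aut(V)$ by asserting that a positive-dimensional family of automorphisms would induce a nontrivial $\C$-action and hence polynomial curves through general points. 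That inference is false: a positive-dimensional connected algebraic subgroup of $Aut(V)$ may be a torus, and $\C^*$-actions are perfectly compatible with non-$\C$-uniruledness (e.g.\ $V=\C^*\times\Gamma$ with $\Gamma$ a non-rational curve is not $\C$-uniruled yet has uncountable automorphism group); moreover $Aut$ of an affine surface is in general only an ind-group, so ``zero-dimensional components'' needs care. So non-$\C$-uniruledness alone does not give the bound you use. What actually makes the count work is the stronger hypothesis $H^0(\overline V,K_{\overline V})\neq 0$: it rules out rational curves through a general point of $V$, and the paper then quotes \cite{jel3} to conclude that $Aut(V)$ is \emph{finite}.

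The second soft spot is your claim that $A^2(V)$ is uncountable ``by Theorem \ref{mum} together with the standard Mumford strengthening'': as stated, Theorem \ref{mum} only gives $A^2(V)\neq 0$, and nonvanishing does not by itself exclude a finite (torsion) group, so some additional input is genuinely required and you do not supply it. The paper's route is cheaper and sufficient for the statement (which asks only for infinitely many models): by Roitman \cite{roit2} one has $A^2(V)\otimes\Q\neq 0$, hence $A^2(V)$ is infinite, and infinitude of $A^2(V)$ together with finiteness of $Aut(V)$ already gives infinitely many orbit classes, hence infinitely many pairwise non-isomorphic affine structures via Theorem \ref{tw1}. (Your phrase that two classes yield isomorphic total spaces ``precisely when'' they are $\rho$-related overstates what Theorem \ref{tw1} gives, but only the forward implication is needed, so that is harmless.) Part (ii) of your proposal is correct and coincides with the paper's proof, since there the finiteness of $Aut(V\times Z)$ is a hypothesis and the infinitude of $\pi^*A^2(V)\subset A^2(V\times Z)$ follows from injectivity of $\pi^*$ once $A^2(V)$ is known to be infinite — which again requires the Roitman-type input above.
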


\begin{proof}
(i) Let ${\bf F}$ be a vector bundle as in Lemma \ref{wiazka}.
 This vector bundle  is algebraically non-trivial but
holomorphically trivial.

Note that the total space of every vector bundle ${\bf F}$ as
above, determines one affine structure $Y_{\bf F}$ on $Y.$ Let
$\rho$ be the relation on $A^2(V)$ such that ${a}$ is in a
relation with ${b}$ if and only if there exists an automorphism
$\sigma\in Aut(V)$ such that $a= \sigma^*b.$ Note that the group
$A^2(V)$ is infinite, because by \cite{roit2} we have
$A^2(V)\otimes \Q\not=0.$ Since the group $Aut(V)$ is finite (see
\cite{jel3}), we have that the set $S:=A^2(V)/\rho$ is infinite.
Denote the class of relation of $a\in A^2(V)$ with respect to
relation $\rho$ by $[a]$. Structures $Y_{\bf F}$ and $Y_{{\bf
F}'}$ are not isomorphic for $[c_2({\bf F})]\not=[c_2({\bf F}')]$
by Theorem \ref{tw1}. This means that there is at least $\#S$
different affine structures on $Y.$

(ii) Since $V$ is not $\C-$uniruled, then also $V\times Z$ is not
$\C-$uniruled. Let $\pi: V\times Z\to V$ be a projection. Take
${\bf G}=\pi^*({\bf F}).$ Then $\bf G$ is holomorphically trivial.
However, it is algebraically non-trivial. Indeed, take a point
$z\in Z. $ If we identify $V$ with $V\times \{z\}\subset V\times
Z$, then ${\bf G}_{|V}={\bf F}.$ Now we can finish as above.

Note that the mapping $\pi^* : A^2(V)\ni a \to \pi^*a\in
A^2(V\times Z)$ is injective, hence the group $A^2(V\times Z)$ is
infinite.   If the group $Aut(V\times Z)$ is finite, then the set
$S'=A^2(V\times Z)/\rho$\ (where $\rho$ is the relation as above)
is infinite. Now we can finish as above.
\end{proof}

\end{document}